\newcommand{\indicator}[1]{\mathbbm{1}\{#1\}}
\DeclareMathOperator{\card}{card}
\subjclass{91C20, 91D25, 91D30, 93D50, 94C15}
\keywords{Social network, voter model, imitation, majority, consensus}
\title{An imitation model based on the majority}
\author{Hsin-Lun Li}
\date{}
\email{hsinlunl@asu.edu}
\theoremstyle{definition}
\newtheorem{theorem}{Theorem}
\newtheorem{lemma}[theorem]{Lemma}
\newtheorem{corollary}[theorem]{Corollary}
\begin{document}

\allowdisplaybreaks

\thispagestyle{firstpage}
\maketitle
\begin{center}
    Hsin-Lun Li
    \centerline{$^1$National Sun Yat-sen University, Kaohsiung 804, Taiwan}
\end{center}
\medskip
%\bigskip

\begin{abstract}
The voter model consists of a set of agents whose opinion is a binary variable. At each time step, an agent along with a social neighbor is selected and the agent imitates the social neighbor at the next time step. In this paper, we study a variant of the voter model: an imitation model based on the majority. The agent imitates the selected social neighbor if and only if the number of neighbors following the opinion of the selected social neighbor is more than that following the agent. We study circumstances under which a consensus can not be achieved and the probability of consensus on a finite connected social graph.
\end{abstract}

\section{Introduction}
The voter model consists of a set of agents whose initial opinion is a binary variable $s=\pm 1$. At each time step, an agent $i$ along with one of its neighbors $j$ is selected and agent $i$ imitates agent $j$ at the next time step. The voter model differs from the Deffuant model and the Hegselmann-Krause model studied in~\cite{mHK,lanchier2020probability,MR3069370,lanchier2022consensus,mHK2}. It has no confidence threshold in opinions. In this paper, we consider a variant of the voter model: an imitation model based on the majority. The mechanism is as follows. Given a finite set of agents, say $[n]=\{1,2,\ldots,n\}$. Let $x_i(t)$ be the opinion of agent $i$ at time $t$ and $x_i(0), i\in [n]$ are binary random variables with state space $$S=\{\pm 1\}\quad \hbox{and} \quad P(x_i(0)=1)=p.$$ At each time step, an agent $i$ along with one of its neighbors $j$ is selected and agent $i$ imitates agent $j$ at the next time step if and only if the number of social neighbors of agent $i$ following the opinion of agent $j$ is more than that following the opinion of agent $i$. Interpreting in a graph, each vertex stands for an agent and an edge connecting two vertices indicates the corresponding agents are social neighbors. Let $G=([n], E)$ be a connected social graph with vertex set and edge set $[n]$ and $E$, let $N_i=\{k\in [n]:k\neq i \ \hbox{and}\ (i,k)\in E\}$ be the collection of all social neighbors of agent $i$ excluding itself, and let $k_t$ be the vertex selected to imitate the selected social neighbor or not at time~$t$, which is a random variable with state space $[n]$. The selected social neighbor of vertex $k_t$ is of state space $N_{k_t}$. Interpreting the mechanism in math,
\begin{align*}
    & x_{k_t}(t)\neq x_{k_t}(t+1)\ \hbox{if and only if}\\
    &\hspace{1cm}\card(\{j\in N_{k_t}:x_j(t)=x_{k_t}(t+1)\})>\card(\{j\in N_{k_t}:x_j(t)=x_{k_t}(t)\})
\end{align*}
for the cardinality of a set $S$ denoted by $\card(S)$.

\section{Main results}
Unlike the voter model on a finite connected social graph, the imitation model based on the majority does not always achieve a consensus. Let $\mathscr{C}$ be the collection of all sample points leading to a consensus, i.e.,
$$\mathscr{C}=\{\max_{i,j\in [n]}(x_i(T)-x_j(T))=0\}.$$
\begin{theorem}\label{thm:consensus}
We have $P(\mathscr{C})=1$ if $G$ is complete and $P(\mathscr{C})\geq 1-2p(1-p)|E|$ for all social graphs $G$. 
\end{theorem}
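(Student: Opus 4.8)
The plan is to use the number of \emph{discordant edges}
$$D(t)=\card(\{(i,j)\in E:x_i(t)\neq x_j(t)\})$$
as a Lyapunov function for the dynamics. The first and most important observation is that $D$ is non-increasing and, more precisely, that it strictly decreases exactly at the steps where $k_t$ actually changes its opinion. Indeed, if $k_t$ does not flip then $D(t+1)=D(t)$; and if $k_t$ flips from $x$ to $-x$, then only the edges incident to $k_t$ change their concordance status, so writing $a$ (resp.\ $b$) for the number of neighbors of $k_t$ in state $x$ (resp.\ $-x$) at time $t$, the $a$ concordant incident edges become discordant and the $b$ discordant ones become concordant, giving $D(t+1)=D(t)+(a-b)$; since a flip requires $b>a$, we get $D(t+1)\le D(t)-1$.

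For the bound valid on every social graph, I would show $\{D(0)=0\}\subseteq\mathscr{C}$: if $D(0)=0$ then, $G$ being connected, all agents share the same initial opinion, and monotonicity forces $D(t)=0$ for all $t$, so a consensus holds at every time, in particular at time $T$. Since the $x_i(0)$ are independent with $P(x_i(0)=1)=p$, each edge is discordant at time $0$ with probability $2p(1-p)$, hence $\E[D(0)]=2p(1-p)|E|$, and Markov's inequality gives $P(D(0)\ge 1)\le 2p(1-p)|E|$. Therefore $P(\mathscr{C})\ge P(D(0)=0)\ge 1-2p(1-p)|E|$.

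For the complete graph I would upgrade this to $P(\mathscr{C})=1$ by proving that a consensus is reached almost surely. Here $D(t)$ depends only on $m(t):=\card(\{i\in[n]:x_i(t)=1\})$, namely $D(t)=m(t)(n-m(t))$, and the consensus configurations are exactly those with $D=0$. The key point is that from \emph{every} non-consensus configuration the chain can move in one step to a configuration with strictly smaller $D$: when $1\le m(t)\le n-1$, a $+1$-vertex has $m(t)-1$ same-opinion and $n-m(t)$ opposite-opinion neighbors, so it flips when selected together with an opposite-opinion neighbor precisely if $m(t)<(n+1)/2$, and symmetrically a $-1$-vertex flips precisely if $m(t)>(n-1)/2$; these two ranges together cover all of $\{1,\dots,n-1\}$ (overlapping at $m=n/2$ when $n$ is even). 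Because the selection rule assigns positive probability to every vertex and to each of its neighbors, at each step such a strictly-decreasing move is performed with probability bounded below by a positive constant $c>0$, so from an arbitrary configuration the absorbing set $\{D=0\}$ is reached within $|E|$ steps with probability at least $c^{|E|}>0$; a standard blockwise application of the Markov property then gives that $\{D=0\}$ is reached in almost surely finite time, i.e.\ $P(\mathscr{C})=1$.

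I expect the complete-graph statement to be the main obstacle. Two points need care: first, verifying that no non-consensus configuration is frozen --- in particular the balanced case $m=n/2$ for even $n$, where both opinion classes are simultaneously unstable --- and second, converting ``a strictly-decreasing move is available with uniformly positive probability'' into almost-sure absorption, which genuinely uses the full-support assumption on the selection mechanism. This cannot work for a general $G$: for instance the path $1-2-3-4$ with opinions $+,+,-,-$ is a frozen non-consensus configuration, which is precisely why the universal statement is only the Markov-type lower bound.
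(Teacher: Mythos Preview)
Your proof is correct, but both halves take a genuinely different route from the paper. For the universal bound, the paper applies the optional stopping theorem to the supermartingale $Z_t$ (which equals $2D(t)+n(n-1)-2|E|$, so is your $D(t)$ up to an additive constant), obtaining $E[D(T)]\le E[D(0)]$, and then uses $D(T)\ge 1$ on $\mathscr{C}^c$ to get $P(\mathscr{C}^c)\le E[D(0)]=2p(1-p)|E|$; you instead ignore the dynamics entirely and just invoke $\{D(0)=0\}\subseteq\mathscr{C}$ together with Markov's inequality at time~$0$. Both land on the identical numerical bound because the paper only exploits the trivial estimate $D(T)\ge 1$ on non-consensus, so the stopping-time machinery buys nothing numerically here---your argument is simply shorter, while the paper's would sharpen if one could establish a nontrivial lower bound on $D(T)$ under $\mathscr{C}^c$. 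For the complete graph, the paper first proves (via martingale convergence of $Z_t$) that there is an almost-surely finite freezing time $T$, then uses the second Borel--Cantelli lemma to force the local-majority inequality at every vertex at time $T$, and finally derives a counting contradiction from two frozen neighbours of opposite sign; you instead show directly that every non-consensus state admits a $D$-decreasing move with uniformly positive probability and run a block-absorption argument. Your approach is more self-contained (no separate convergence lemma needed), whereas the paper's route isolates a general freezing lemma that it reuses for other results, such as the path/cycle obstruction you mention at the end.
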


A tree is a connected graph with the least edges. The social graph $G$ is assumed connected. Therefore, $G$ is a tree if and only if the number of edges in $G$ is $n-1$. 
\begin{corollary}
We have  $P(\mathscr{C})\geq 1/2|E|$ if $p\leq 1/2|E|$. In particular, $P(\mathscr{C})\geq \frac{1}{2(n-1)}$ if $p\leq \frac{1}{2(n-1)}$ and $G$ is a tree.
\end{corollary}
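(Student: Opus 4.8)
The plan is to deduce the corollary directly from the inequality $P(\mathscr{C}) \geq 1 - 2p(1-p)|E|$ supplied by Theorem~\ref{thm:consensus}, by bounding the quadratic $p \mapsto 2p(1-p)|E|$ on the interval $[0,\tfrac{1}{2|E|}]$ in which the hypothesis places us.

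First I would record the elementary fact that a connected graph on $n\ge 2$ vertices has $|E|\ge 1$, so that $\tfrac{1}{2|E|}\le\tfrac12$. On $[0,\tfrac12]$ the map $p\mapsto p(1-p)$ is nondecreasing, since its derivative $1-2p$ is nonnegative there. Hence, under the hypothesis $p\le\tfrac{1}{2|E|}$,
$$2p(1-p)|E| \;\le\; 2\cdot\frac{1}{2|E|}\Bigl(1-\frac{1}{2|E|}\Bigr)|E| \;=\; 1-\frac{1}{2|E|}.$$
Substituting this into the bound of Theorem~\ref{thm:consensus} gives $P(\mathscr{C}) \ge 1-\bigl(1-\tfrac{1}{2|E|}\bigr) = \tfrac{1}{2|E|}$, which is the first assertion. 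For the second, I use the remark preceding the corollary that a tree on $n$ vertices has exactly $n-1$ edges: setting $|E| = n-1$ yields $P(\mathscr{C}) \ge \tfrac{1}{2(n-1)}$ whenever $p\le\tfrac{1}{2(n-1)}$.

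There is essentially no serious obstacle. The only point that needs a moment's care is that the estimate of Theorem~\ref{thm:consensus} is not monotone in $p$ over all of $[0,1]$, so one must stay in the regime $p\le\tfrac{1}{2|E|}\le\tfrac12$ where $p(1-p)$ is increasing; the hypothesis does exactly this, and the worst case for the bound occurs at the endpoint $p=\tfrac{1}{2|E|}$, where it is attained with equality.
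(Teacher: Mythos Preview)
Your argument is correct and is precisely the derivation the paper intends: the corollary is stated immediately after Theorem~\ref{thm:consensus} with no separate proof, so it is meant to follow by plugging $p\le\tfrac{1}{2|E|}$ into the bound $P(\mathscr{C})\ge 1-2p(1-p)|E|$ and using $|E|=n-1$ for a tree. Your monotonicity observation on $p\mapsto p(1-p)$ over $[0,\tfrac12]$ is the right way to make this rigorous.
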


\section{The model}
\begin{lemma}\label{key}
Let $ Z_t=\sum_{i,j\in [n]}\mathbbm{1}\{x_i(t)\neq x_j(t)\}\vee \mathbbm{1}\{(i,j)\notin E\}$. Then, $Z_t$ is nonincreasing with respect to $t$. In particular,
\begin{align*}
   &Z_t-Z_{t+1}=2\big(\card(\{i\in N_{k_t}: \ x_i(t)=x_{k_t}(t+1)\})\\
   &\hspace{6cm}-\card(\{i\in N_{k_t}: x_i(t)=x_{k_t}(t)\})\big).
\end{align*}
\end{lemma}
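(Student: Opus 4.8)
The plan is to track the one-step change of $Z_t$ and to exploit that at most one coordinate of the configuration moves at any given time. First I would record the elementary but crucial observations: at time $t$ only vertex $k_t$ can update, so $x_i(t+1)=x_i(t)$ for every $i\neq k_t$; and since the state space $S=\{\pm1\}$ is binary, if $x_{k_t}(t+1)\neq x_{k_t}(t)$ then $x_{k_t}(t+1)$ is automatically the opinion of the selected neighbour, so the defining condition of the model is exactly the ``iff'' displayed in the introduction.

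Next I would simplify $Z_t$ itself. Splitting the sum over ordered pairs $(i,j)\in[n]^2$ according to whether $(i,j)\in E$, the non-edge pairs — which include all diagonal pairs $i=j$ since $G$ has no self-loops — contribute $1$ each irrespective of $t$, hence sum to a constant $c$ independent of $t$; the edge pairs contribute $\mathbbm{1}\{x_i(t)\neq x_j(t)\}$. Thus $Z_t=c+\sum_{(i,j):\,(i,j)\in E}\mathbbm{1}\{x_i(t)\neq x_j(t)\}$, the sum being over ordered pairs so that each edge is counted twice. Subtracting the expressions at times $t$ and $t+1$, the constant cancels, and by the first observation the only ordered pairs whose summand can change are $(k_t,i)$ and $(i,k_t)$ with $i\in N_{k_t}$. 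Using $x_i(t+1)=x_i(t)$ and the twofold counting, this gives
\begin{align*}
Z_t-Z_{t+1}=2\sum_{i\in N_{k_t}}\big(\mathbbm{1}\{x_i(t)\neq x_{k_t}(t)\}-\mathbbm{1}\{x_i(t)\neq x_{k_t}(t+1)\}\big).
\end{align*}
Replacing each $\mathbbm{1}\{a\neq b\}$ by $1-\mathbbm{1}\{a=b\}$ and cancelling the constants rewrites the right-hand side as $2\big(\card(\{i\in N_{k_t}:x_i(t)=x_{k_t}(t+1)\})-\card(\{i\in N_{k_t}:x_i(t)=x_{k_t}(t)\})\big)$, which is the asserted identity.

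Finally, monotonicity follows from this identity together with the update rule. If $x_{k_t}(t+1)=x_{k_t}(t)$ the two cardinalities coincide and $Z_t-Z_{t+1}=0$. If $x_{k_t}(t+1)\neq x_{k_t}(t)$, then by the defining condition of the model $\card(\{i\in N_{k_t}:x_i(t)=x_{k_t}(t+1)\})>\card(\{i\in N_{k_t}:x_i(t)=x_{k_t}(t)\})$, so $Z_t-Z_{t+1}>0$; in either case $Z_{t+1}\leq Z_t$. I do not expect a genuine obstacle here: the proof is essentially bookkeeping, and the only points requiring care are the distinction between ordered and unordered pairs in the definition of $Z_t$ and the observation that the $\vee\,\mathbbm{1}\{(i,j)\notin E\}$ clause contributes only a time-independent constant, so that $Z_t$ is, up to an additive constant, twice the number of ``disagreeing'' edges.
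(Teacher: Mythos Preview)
Your argument is correct and follows essentially the same route as the paper: both reduce $Z_t-Z_{t+1}$ to the ordered pairs involving $k_t$, drop the non-neighbour terms because the $\vee\,\mathbbm{1}\{(i,j)\notin E\}$ clause makes them constant, and arrive at the same two-cardinality identity. Your version is slightly more explicit in isolating the time-independent constant in $Z_t$ and in spelling out the two cases for monotonicity, but the substance is identical.
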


\begin{proof}
For all $t\geq 0$, let $k=k_t$, $x_i=x_i(t)$ and $x_i^\star=x(t+1)$ for all $i\in [n]$. Then, 
\begin{align*}
    &Z_t-Z_{t+1}=\sum_{i,j\in [n]}\big(\indicator{x_i\neq x_j}\vee\indicator{(i,j)\notin E}-\indicator{x_i^\star\neq x_j^\star}\vee\indicator{(i,j)\notin E}\big)\\
    &\hspace{0.5cm}=2\sum_{i\in [n]-\{k\}}\big(\indicator{x_i\neq x_k}\vee\indicator{(i,k)\notin E}-\indicator{x_i\neq x_k^\star}\vee\indicator{(i,k)\notin E}\big)\\
    &\hspace{0.5cm}=2\sum_{i\in N_k}\big(\indicator{x_i\neq x_k}-\indicator{x_i\neq x_k^\star}\big)\\
    &\hspace{0.5cm}=2\big(\card(\{i\in N_k: \ x_i=x_k^\star\})-\card(\{i\in N_k: x_i=x_k\})\big).
\end{align*}
\end{proof}

\begin{lemma}[Uniform convergence]\label{uniform convergence}
    We have $x_i(t)=x_i(t+1)$ for all $i\in [n]$ after some finite time almost surely.
\end{lemma}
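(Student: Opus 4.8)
The plan is to use Lemma~\ref{key} as a Lyapunov-type argument. The quantity $Z_t$ takes values in the nonnegative integers and is nonincreasing in $t$, so it must eventually stabilize: there exists a (random) finite time $\tau$ after which $Z_t$ is constant. By the explicit formula in Lemma~\ref{key}, once $Z_t$ is constant we have $\card(\{i\in N_{k_t}: x_i(t)=x_{k_t}(t+1)\})=\card(\{i\in N_{k_t}: x_i(t)=x_{k_t}(t)\})$ for every $t\geq\tau$, which forces $x_{k_t}(t+1)=x_{k_t}(t)$ (the agent does not flip, since a flip would require a strict inequality). Since $k_t$ is the only vertex whose opinion can change at step $t$, this gives $x_i(t+1)=x_i(t)$ for all $i\in[n]$ and all $t\geq\tau$, which is exactly the claim.

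The only subtlety is that $Z_t$ stabilizing is a pathwise statement, not an almost-sure one that needs probabilistic input — $Z_t$ is a deterministic function of the sample path and is integer-valued and bounded below by $0$, so on every sample point it is eventually constant. Hence the conclusion in fact holds surely, and a fortiori almost surely. I would state it that way: fix an arbitrary sample point $\omega$; along $\omega$ the sequence $(Z_t(\omega))_{t\geq 0}$ is a nonincreasing sequence of nonnegative integers, hence eventually constant, and the argument above applies verbatim.

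I expect there is essentially no obstacle here; the work has already been done in Lemma~\ref{key}. The one point to be careful about in writing is the logical step "$Z_t=Z_{t+1}$ implies the agent does not flip": one must note that $x_{k_t}(t+1)\in\{\pm1\}$, so either $x_{k_t}(t+1)=x_{k_t}(t)$ (and then both cardinalities in Lemma~\ref{key} trivially refer to the same quantity, giving $Z_t-Z_{t+1}=0$ automatically) or $x_{k_t}(t+1)\neq x_{k_t}(t)$, in which case by the model's update rule the strict inequality $\card(\{i\in N_{k_t}: x_i(t)=x_{k_t}(t+1)\})>\card(\{i\in N_{k_t}: x_i(t)=x_{k_t}(t)\})$ holds, whence $Z_t-Z_{t+1}\geq 2>0$, contradicting $Z_t=Z_{t+1}$. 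So the second case is impossible once $Z$ has stabilized, and we are done.
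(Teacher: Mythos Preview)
Your proof is correct and in fact more elementary than the paper's. The paper observes that $(Z_t)_{t\geq 0}$ is a supermartingale uniformly bounded in $L^1$, invokes the martingale convergence theorem to get $Z_t\to Z_\infty$ almost surely, and then uses integrality of the increments to conclude that $Z_t=Z_{t+1}$ eventually. You bypass the probabilistic machinery entirely: since $(Z_t)$ is a nonincreasing sequence of nonnegative integers on each sample path, it is eventually constant pathwise, and then the update rule forces no flips. Your route is shorter, avoids the martingale convergence theorem, and yields a sure (not merely almost sure) statement; the paper's supermartingale framing, on the other hand, is what is later reused with the optional stopping theorem in the proof of Theorem~\ref{thm:consensus}, so it is not wasted effort in the paper's overall architecture. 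You are also more explicit than the paper about why $Z_t=Z_{t+1}$ rules out a flip, which is a point the paper leaves to the reader.
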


\begin{proof}
$(Z_t)_{t\geq 0}$ is a supermartingale uniformly bounded in $L^1$, i.e., $E|Z_t|\leq E|Z_0|<\infty$ for all $t\geq 0$. By the martingale convergence theorem, $Z_t$ converges almost surely to $Z_\infty\in L^1$. It follows that $$\lim_{t\to\infty}(Z_t-Z_{t+1})=Z_\infty-Z_\infty=0.$$ Since $Z_t-Z_{t+1}\in\mathbf{N}$, $Z_t=Z_{t+1}$ after some finite time. Therefore, $x_i(t)=x_i(t+1)$ for all $i\in [n]$ after some finite time.  

 %   Assume that this is not the case. Then, there are $(t_k)_{k\geq 0}\subset\mathbf{N}$ increasing and $(i_k)_{k\geq 0}\subset V$ such that $x_{i_k}(t_k)\neq x_{i_k}(t_k+1)$ for all $k\geq 0.$ It follows from Lemma~\ref{key} that
%    $$\infty>Z(0)\geq\sum_{t\geq0}(Z(t)-Z(t+1))\geq \sum_{k\geq 0}(Z(t_k)-Z(t_k+1))\geq \sum_{k\geq 0}1=\infty,$$
%    a contradiction.
\end{proof}
Let $T$ be the earliest time that no agents imitate the other social neighbors, i.e., $$T=\inf\{t\geq 0: x_i(s)=x_i(s+1)\ \hbox{for all}\ i\in [n]\ \hbox{and}\ s\geq t \}.$$ It follows from Lemma~\ref{uniform convergence} that $T$ is almost surely finite. Via the second Borel-Cantelli lemma, a vertex along with each social neighbor is selected infinitely many times, therefore engendering the following lemma.

\begin{lemma}\label{lemma:fact}
    We have $$\card(\{j\in N_{i}:x_j(T)=-x_{i}(T)\})\leq\card(\{j\in N_{i}:x_j(T)=x_{i}(T)\})$$
    for all $i\in [n]$.
\end{lemma}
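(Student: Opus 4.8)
The plan is to argue by contradiction, using that after time $T$ the configuration is frozen while, by the second Borel--Cantelli lemma, every vertex continues to be selected together with each of its neighbors infinitely often. Fix $i\in[n]$ and write $a=\card(\{j\in N_i:x_j(T)=x_i(T)\})$ and $d=\card(\{j\in N_i:x_j(T)=-x_i(T)\})$; the goal is $d\le a$. Suppose instead $d>a$. Since $a\ge 0$, this forces $d\ge 1$, so some neighbor $j_0\in N_i$ satisfies $x_{j_0}(T)=-x_i(T)$.

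Next I would invoke Borel--Cantelli. The successive selections of a vertex together with one of its neighbors are i.i.d., so the events ``at time $t$ the chosen vertex is $i$ and its chosen neighbor is $j_0$'' are independent with a common positive probability; hence this particular pair is selected at infinitely many times, almost surely. Because $T<\infty$ almost surely by Lemma~\ref{uniform convergence}, on an event of probability one there is a time $t\ge T$ at which $k_t=i$ and the selected neighbor is $j_0$. At that time the definition of $T$ gives $x_\ell(t)=x_\ell(T)$ for all $\ell$, so $\card(\{\ell\in N_i:x_\ell(t)=x_{j_0}(t)\})=d>a=\card(\{\ell\in N_i:x_\ell(t)=x_i(t)\})$. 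The imitation rule then makes agent $i$ copy $j_0$, i.e.\ $x_i(t+1)=x_{j_0}(t)=-x_i(T)\ne x_i(T)=x_i(t)$, contradicting $t\ge T$. Hence $d\le a$, and intersecting the finitely many probability-one events over $i\in[n]$ gives the statement for all $i$ at once.

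I do not expect a genuine obstacle here; the only point needing a little care is that $T$ is a random time, so one cannot deterministically ``wait past $T$''. The fix is exactly the Borel--Cantelli step above: a pair selected at infinitely many times is, almost surely, selected at some time beyond the almost surely finite $T$, and one then reads off the contradiction from the frozen configuration at that moment.
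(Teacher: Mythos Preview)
Your argument is correct and is precisely the approach the paper intends: the paper's only justification is the sentence preceding the lemma, namely that by the second Borel--Cantelli lemma every vertex--neighbor pair is selected infinitely often, which together with $T<\infty$ a.s.\ forces the stated inequality. You have simply spelled out that reasoning in full, including the care needed because $T$ is random.
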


\begin{lemma}\label{equivalent statements for consensus}
    The following statements are equivalent:
    \begin{enumerate}
        \item a consensus can be achieved,\label{consensus}\vspace{2pt}
        \item $x_i(T)=x_j(T)$ for all $i\in [n]$ and $j\in N_i$, and\label{neighborhood}\vspace{2pt}
        \item $Z_T=n(n-1)-2|E|$.\label{identity for consensus}
    \end{enumerate}
\end{lemma}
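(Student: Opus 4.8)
The three statements all refer to the frozen configuration $(x_i(T))_{i\in[n]}$, which is well defined almost surely by Lemma~\ref{uniform convergence}; recall that $(\ref{consensus})$ asserts $\max_{i,j\in[n]}(x_i(T)-x_j(T))=0$, i.e.\ that all the $x_i(T)$ coincide. The plan is to establish $(\ref{consensus})\Leftrightarrow(\ref{neighborhood})$ and then $(\ref{neighborhood})\Leftrightarrow(\ref{identity for consensus})$.

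The implication $(\ref{consensus})\Rightarrow(\ref{neighborhood})$ is immediate, since if all the $x_i(T)$ are equal then in particular $x_i(T)=x_j(T)$ for every $j\in N_i$. For the converse $(\ref{neighborhood})\Rightarrow(\ref{consensus})$ I would invoke connectivity of $G$: given $u,v\in[n]$, pick a path $u=w_0,w_1,\dots,w_m=v$ in $G$; since $w_{\ell+1}\in N_{w_\ell}$ for each $\ell$, statement $(\ref{neighborhood})$ forces $x_{w_\ell}(T)=x_{w_{\ell+1}}(T)$ all along the path, hence $x_u(T)=x_v(T)$. As $u$ and $v$ are arbitrary, the time-$T$ configuration is constant, which is $(\ref{consensus})$.

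For $(\ref{neighborhood})\Leftrightarrow(\ref{identity for consensus})$ I would rewrite $Z_T$ by separating, in the defining double sum, the pairs $(i,j)$ with $(i,j)\notin E$ --- which contribute a constant depending only on $G$ --- from the adjacent pairs, each of the latter contributing $\indicator{x_i(T)\neq x_j(T)}$ with every undirected edge counted twice. This yields
\[
  Z_T=n(n-1)-2|E|+2\,\card(\{\{i,j\}\in E:\ x_i(T)\neq x_j(T)\}),
\]
the constant $n(n-1)-2|E|$ being the value of $Z$ at any concordant configuration. Since the last term is a nonnegative integer, $Z_T=n(n-1)-2|E|$ holds if and only if no edge of $G$ is discordant at time $T$, i.e.\ $x_i(T)=x_j(T)$ for all $i$ and all $j\in N_i$, which is exactly $(\ref{neighborhood})$.

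This lemma is essentially bookkeeping; the only points needing care are the factor $2$ coming from ordered pairs and the precise count of non-adjacent pairs when expanding $Z_T$, together with making explicit that a consensus means a constant frozen configuration, so that the path argument using connectivity applies. I do not anticipate a genuine obstacle.
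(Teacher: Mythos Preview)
Your proof is correct and follows essentially the same approach as the paper: the same decomposition of $Z_T$ into the constant $n(n-1)-2|E|$ plus the (doubled) count of discordant edges, and the same connectivity path argument linking agreement across edges to global consensus. The only cosmetic difference is that the paper organizes the implications as the cycle $(\ref{consensus})\Rightarrow(\ref{neighborhood})\Rightarrow(\ref{identity for consensus})\Rightarrow(\ref{consensus})$ rather than two biconditionals.
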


\begin{proof}
   It is clear that \ref{consensus} implies \ref{neighborhood}. For \ref{neighborhood}$\implies$\ref{identity for consensus}, observe that
   \begin{align}
       Z_t&=\sum_{i=1}^n\big(\sum_{j\in N_i}\indicator{x_i(t)\neq x_j(t)}+\sum_{j\in N_i^c-\{i\} }1\big)\notag\\
       &=\sum_{i=1}^n\sum_{j\in N_i}\indicator{x_i(t)\neq x_j(t)}+\sum_{i=1}^n(n-\deg_G(i)-1)\notag\\
       &=\sum_{i=1}^n\sum_{j\in N_i}\indicator{x_i(t)\neq x_j(t)}+n(n-1)-2|E|\label{handshaking lemma}
   \end{align}
   where $\deg_G(i)$ is the degree of vertex $i$ in $G$ and \eqref{handshaking lemma} follows the handshaking lemma. Hence, $Z_T=n(n-1)-2|E|.$ For \ref{identity for consensus}$\implies$\ref{consensus}, \eqref{handshaking lemma} implies $$\sum_{i=1}^n\sum_{j\in N_i}\indicator{x_i(T)\neq x_j(T)}=0,\ \hbox{therefore}\ x_i(T)=x_j(T)$$  for all $i\in [n]$ and $j\in N_i$. Due to the connectedness of $G$, there is an $i,\ j$-path $P$ in $G$ for all vertices $i$ and $j$. Since any two adjacent vertices on $P$ are social neighbors having the same opinion at time $T$, $x_i(T)=x_j(T)$ for all $i,\ j\in [n].$ Hence, a consensus can be achieved.
\end{proof}

\begin{lemma}\label{lemma:complete}
    $G$ complete implies a consensus achieved eventually almost surely. 
\end{lemma}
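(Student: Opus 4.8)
The plan is to combine the almost-sure finiteness of $T$ (Lemma~\ref{uniform convergence}) with the local majority condition at time~$T$ (Lemma~\ref{lemma:fact}), specialized to the complete graph. When $G$ is complete we have $N_i=[n]-\{i\}$ for every $i$, so Lemma~\ref{lemma:fact} becomes a statement about the global counts of the two opinions. I would write $a=\card(\{i\in[n]:x_i(T)=1\})$ and $b=\card(\{i\in[n]:x_i(T)=-1\})$, so that $a+b=n$.

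Next I would argue by contradiction, assuming the configuration at time~$T$ is not a consensus, i.e. $a\geq 1$ and $b\geq 1$. Picking a vertex $i$ with $x_i(T)=1$, the set $\{j\in N_i:x_j(T)=-x_i(T)\}$ has $b$ elements while $\{j\in N_i:x_j(T)=x_i(T)\}$ has $a-1$ elements, so Lemma~\ref{lemma:fact} gives $b\leq a-1$. Applying the same reasoning to a vertex $i$ with $x_i(T)=-1$ gives $a\leq b-1$. Adding the two inequalities yields $a+b\leq a+b-2$, a contradiction. Hence $a=0$ or $b=0$; either way all agents share the same opinion at time~$T$, so the sample point lies in $\mathscr{C}$ (equivalently, condition~\ref{neighborhood} of Lemma~\ref{equivalent statements for consensus} holds trivially, and $\mathscr{C}$ is reached). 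Since $T<\infty$ almost surely and $x_i(t)=x_i(T)$ for all $i$ and all $t\geq T$ by the definition of $T$, the consensus persists forever, giving $P(\mathscr{C})=1$.

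I do not expect a genuine obstacle here. The only point needing a little care is the treatment of the vacuous cases: if $a=0$ the first inequality is never invoked, and symmetrically if $b=0$, so the contradiction is derived strictly under the standing assumption $a,b\geq 1$. The essential content is simply the observation that on the complete graph the local majority constraint of Lemma~\ref{lemma:fact} forces each nonempty opinion class to strictly outnumber the other, which cannot happen for two classes simultaneously.
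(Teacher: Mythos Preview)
Your argument is correct and mirrors the paper's proof: both proceed by contradiction, invoke Lemma~\ref{lemma:fact} on the complete graph (where $N_i=[n]-\{i\}$) for a vertex of each opinion, and obtain incompatible strict inequalities between the two opinion counts. Your use of the global counts $a,b$ is a slightly tidier bookkeeping of the same computation the paper carries out with $(n-1)/2$.
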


\begin{proof}
    Assume that this is not the case. By Lemma~\ref{equivalent statements for consensus}, there are $i\in [n]$, $j\in N_i$ and $x_i(T)\neq x_j(T)$, say $x_i(T)=1$ and $x_j(T)=-1$. Also via Lemma~\ref{lemma:fact}, $$\card(\{k\in [n]-\{i\}:x_k(T)=1\})\geq \card(\{k\in [n]-\{i\}:x_k(T)=-1\}),$$
    therefore
    $$\card(\{k\in [n]-\{i\}:x_k(T)=1\})\geq \frac{n-1}{2}.$$
    So 
    \begin{align*}
       &\card(\{k\in [n]-\{j\}:x_k(T)=1\})\geq \frac{n-1}{2}+1\\
       &\hspace{3cm}>\card(\{k\in [n]-\{j\}:x_k(T)=-1\}),\ \hbox{a contradiction.} 
    \end{align*} 
\end{proof}

\begin{lemma}\label{nonconsensus}
    Let $G$ be a cycle or a path with $n\geq 4$. Then, a consensus can be achieved if and only if no four consecutive vertices correspond to opinions $1,\ 1,\ -1,\ -1$ at the initial time.
\end{lemma}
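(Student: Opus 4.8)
The plan is to work with the \emph{block decomposition} of a configuration. On a path or a cycle every vertex has degree at most $2$, so a configuration is described by its maximal monochromatic runs $B_1,\dots,B_m$ of sizes $s_1,\dots,s_m\ge 1$, with consecutive blocks carrying opposite opinions, and a consensus corresponds to $m\le 1$. The key structural observation is that if $k_t$ lies in the interior of a block of size $\ge 2$ it has no disagreeing neighbour, while if it is an endpoint of such a block it has exactly one disagreeing neighbour out of two; in both cases Lemma~\ref{key} gives $Z_t=Z_{t+1}$, so $x_{k_t}$ does not move. Hence the only vertices that ever change are those forming a block of size $1$, and flipping such an ``isolated'' vertex merges it with its two neighbouring blocks into one block of size $\ge 3$, lowering the block count by $2$ (or by $1$ when the isolated block sits at an end of a path and merges with its single neighbour). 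This reduces the lemma to a combinatorial statement about how block-size sequences evolve.

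For the ``only if'' direction I would show that a $1,1,-1,-1$ window is frozen. Suppose four consecutive vertices $u_1,u_2,u_3,u_4$ carry opinions $1,1,-1,-1$ at time $0$; I claim they keep these opinions for all $t$, by induction on $t$. If $k_t\notin\{u_1,u_2,u_3,u_4\}$ there is nothing to prove. If $k_t=u_2$ then $N_{u_2}=\{u_1,u_3\}$ with opinions $1,-1$, so $u_2$ has one disagreeing neighbour out of two and does not flip, and symmetrically for $k_t=u_3$. If $k_t=u_1$ (resp.\ $u_4$), then $u_2$ (resp.\ $u_3$) is a neighbour of $u_1$ (resp.\ $u_4$) agreeing with it, so $u_1$ (resp.\ $u_4$) again has at most one disagreeing neighbour among its at most two neighbours and does not flip. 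Consequently $x_{u_2}(T)=1\neq -1=x_{u_3}(T)$ for the adjacent pair $u_2,u_3$, and Lemma~\ref{equivalent statements for consensus} shows a consensus is not achieved.

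For the ``if'' direction one must produce, from an initial configuration with no $1,1,-1,-1$ window, a finite sequence of admissible single-vertex updates ending at a monochromatic configuration, equivalently show that $\mathscr{C}$ occurs with positive probability from such an $x(0)$. By the reduction above this becomes: starting from a block-size sequence with $m\ge 2$ blocks and no two \emph{consecutive} entries both $\ge 2$, one can always flip some isolated vertex so that the resulting shorter sequence again has no two consecutive entries both $\ge 2$; iterating then forces $m$ down to the consensus value. A natural ``safe'' flip is an isolated block both of whose second-neighbour blocks have size $1$ or are absent, since the triple merge it triggers only creates new adjacencies between the merged (large) block and those two second-neighbour blocks, and if the latter are small no two large blocks become adjacent; an isolated block at an end of a path is handled separately because its flip merges only two blocks.

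The step I expect to be the main obstacle is exactly the existence of a safe flip: a careless choice can place a size-$\ge 2$ block next to another and freeze the configuration short of consensus, so one has to argue --- by analysing how the size-$\ge 2$ blocks are spaced along the path or cycle, with the two ends of a path and short runs of isolated blocks wedged between two larger blocks as the delicate cases --- that some isolated block is always positioned for a safe flip. Making this positional argument close uniformly in $m$ and over both ambient graphs is where the real work lies; the ``only if'' direction and the block reduction are routine by comparison.
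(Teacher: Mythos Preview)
Your $(\Rightarrow)$ argument is correct and is exactly the paper's. For $(\Leftarrow)$ the paper does \emph{not} build a schedule; it argues at the frozen time $T$: assuming consensus fails, pick adjacent $x_1(T)=1$, $x_2(T)=-1$, deduce $x_3(T)=-1$ from Lemma~\ref{lemma:fact}, then invoke the ``no $1,1,-1,-1$'' hypothesis to rule out $x_n(T)=1$, whence $x_n(T)=-1$ and Lemma~\ref{lemma:fact} at vertex~$1$ is violated. So the paper's route is a two-line contradiction at time $T$, whereas yours is a constructive induction on the block count.

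The obstacle you flag is not just the hard step --- it is fatal, and for a structural reason. On the $8$-cycle take $x(0)=(+,+,-,+,-,-,+,-)$; the block-size sequence is $2,1,1,2,1,1$ and one checks directly that no window is $1,1,-1,-1$ (in either orientation). The singletons sit at positions $3,4,7,8$, and for each of them one of the two second-neighbour blocks is large, so \emph{no} safe flip exists. Exhausting the possibilities, every admissible update sequence freezes after two flips at a non-consensus state (e.g.\ flip $3$ then $7$ to get $(+,+,+,+,-,-,-,-)$). Thus your induction cannot be closed, and in fact the $(\Leftarrow)$ implication is false on cycles. The paper's argument has the matching gap: it applies the ``no $1,1,-1,-1$'' hypothesis at time $T$, but that hypothesis is stated at time $0$ and, as this example shows, need not persist.
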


\begin{proof}
    $(\implies)$\ Assume that this is not the case. Without loss of generality, consider the cycle $1\rightarrow 2\rightarrow \ldots\rightarrow n\rightarrow 1$ with $x_1(0)=1$, $x_2(0)=1$, $x_3(0)=-1$ and $x_4(0)=-1$. We claim by induction that vertices 1, 2, 3 and 4 remain unchanged in opinion. It is true for $t=1$ regardless of what vertex along with one of its neighbors is selected. If this is true for $t$, then this is true for $t+1$. Therefore, $x_1(t)=1$, $x_2(t)=1$, $x_3(t)=-1$ and $x_4(t)=-1$ for all $t\geq 0.$ This is also true if $G$ is a path. So a consensus can not be achieved, a contradiction. 

    $(\impliedby)$\ Assume that this is not the case. Following Lemma~\ref{equivalent statements for consensus}, consider the cycle $1\rightarrow 2\rightarrow \ldots\rightarrow n\rightarrow 1$ with $x_1(T)=1$ and $x_2(T)=-1$. Then, $x_3(T)=-1$ and $x_n(T)=-1$, contradicting $x_1(T)=1$. Considering the path by removing edge $(1,n)$ from $G$, still contradict $x_1(T)=1.$
    
\end{proof}

Lemma~\ref{nonconsensus} gives rise to the following lemma.

\begin{lemma}
    A consensus can not be achieved for all social graphs $G$ having a path $a\rightarrow b\rightarrow c\rightarrow d$ corresponding to opinions $1,\ 1,\ -1,\ -1$ at the initial time and vertices $a$, $b$, $c$ and $d$ of degree at most 2 in $G$.
\end{lemma}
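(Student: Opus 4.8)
The plan is to reuse the freezing idea from the proof of Lemma~\ref{nonconsensus}: show that the block $a,b,c,d$ never changes opinion, and then conclude via Lemma~\ref{equivalent statements for consensus}.

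First I would extract what the degree hypothesis gives. Since $b$ is adjacent to both $a$ and $c$ and $\deg_G(b)\le 2$, necessarily $N_b=\{a,c\}$; symmetrically $N_c=\{b,d\}$. Also $a$ is adjacent to $b$ and has at most one neighbour besides $b$, and $d$ is adjacent to $c$ and has at most one neighbour besides $c$. (In passing, $a,b,c,d$ are distinct, since a repeated vertex among them would have to carry both opinions $1$ and $-1$ at time $0$.)

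Then I would prove by induction on $t$ that $(x_a(t),x_b(t),x_c(t),x_d(t))=(1,1,-1,-1)$ for all $t\ge 0$, the case $t=0$ being the hypothesis. For the inductive step only $k_t$ can flip, so it suffices to check $k_t\in\{a,b,c,d\}$. If $k_t=b$: among $N_b=\{a,c\}$ exactly one neighbour ($a$) agrees with $b$ and one ($c$) disagrees, so for any selected neighbour $j\in N_b$ the number of $b$'s neighbours sharing $x_j(t)$ is at most $1$ and the number sharing $x_b(t)$ is at least $1$; the strict inequality of the imitation rule fails and $x_b(t+1)=x_b(t)=1$. The case $k_t=c$ is symmetric. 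If $k_t=a$: the neighbour $b$ already agrees with $a$ and $|N_a|\le 2$, so at most one neighbour of $a$ carries the opposite opinion $-1$ while at least one ($b$) carries $1$; again the strict inequality fails and $x_a(t+1)=1$. The case $k_t=d$ is symmetric. This closes the induction.

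Finally, since $c\in N_b$ and $x_b(T)=1\ne -1=x_c(T)$, statement \ref{neighborhood} of Lemma~\ref{equivalent statements for consensus} fails, hence so does \ref{consensus}, i.e.\ a consensus cannot be achieved. I do not expect a serious obstacle here; the only delicate point is the case $k_t=a$ (and $k_t=d$), where one must control the possible extra neighbour of $a$ off the path, and this is exactly where $\deg_G(a)\le 2$ is used: with degree $3$ the vertex $a$ could imitate the opinion $-1$ and unfreeze the whole block.
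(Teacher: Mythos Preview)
Your proposal is correct and follows essentially the same approach as the paper. The paper in fact gives no explicit proof of this lemma: it merely remarks that ``Lemma~\ref{nonconsensus} gives rise to'' it, i.e., the same freezing-by-induction argument for the four-vertex block applies verbatim once the degree bound pins down $N_b=\{a,c\}$ and $N_c=\{b,d\}$ and limits $N_a,N_d$ to size at most~$2$---which is precisely what you wrote out in detail.
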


\begin{proof}[\bf Proof of Theorem~\ref{thm:consensus}]
It follows from Lemma~\ref{lemma:complete} that $P(\mathscr{C})=1$ if $G$ is complete.
    Let $q=1-p$. Following Lemmas~\ref{key} and \ref{uniform convergence}, $(Z_t)_{t\geq 0}$ is a bounded supermartingale and $T$ is almost surely finite. By the optional stopping theorem, 
    $$ E(Z_T)\leq E(Z_0),$$ 
    therefore via \eqref{handshaking lemma} and conditioning on $\mathscr{C}^c$ the complement of $\mathscr{C}$,
    $$E\big(\sum_{i=1}^n\sum_{j\in N_i}\indicator{x_i(T)\neq x_j(T)}|\mathscr{C}^c\big)P(\mathscr{C}^c)\leq \sum_{i=1}^n\sum_{j\in N_i}E(\indicator{x_i(0)\neq x_j(0)}).$$ 
    By Lemma~\ref{equivalent statements for consensus}, $\indicator{x_i(0)\neq x_j(0)}=\hbox{Bernoulli}(2pq)$ and handshaking lemma,
    $$2(1-P(\mathscr{C}))\leq 4pq|E|,$$
    therefore
    $$P(\mathscr{C})\geq 1-2pq|E|.$$
    
\end{proof}

\section*{Acknowledgment}
The author is funded by the National Science and Technology Council in Taiwan.

\end{document}